\theoremstyle{plain}
\newtheorem{theorem}{Theorem}
\newtheorem{lemma}{Lemma}
\theoremstyle{definition}
\newcommand{\nonprint}[1]{}
\begin{document}

\begin{flushleft}

\small
DOI 10.1007/s11253-019-01588-w\\
\textit{Ukrainian Mathematical Journal, Vol.70, No.10, March, 2019 (Ukrainian Original Vol.70, No.10, October, 2018)}
\vspace{+0.2cm}

\medskip

\normalsize

\textbf{O.\,M.~Atlasiuk, V.\,A.~Mikhailets} \small(Institute of Mathematics of NAS of Ukraine, Kyiv)

\Large

\textbf{Fredholm one-dimensional boundary-value problems\\ in Sobolev spaces}
\end{flushleft}

\normalsize

\begin{abstract}
For systems of ordinary differential equations on a compact interval, we study the character of solvability
of the most general linear boundary-value problems in Sobolev spaces. We find the indices of these
problems and obtain a criterion of their well-posedness.
\end{abstract}

\section{Introduction}\label{section1}

The investigation of the solutions of systems of ordinary differential equations is an important part of numerous
problems of contemporary analysis and its applications (see, e.g.,~\cite{BochSAM2004}~and the references therein). For general linear boundary-value problems, the conditions required for the Fredholm property and the continuous dependence of the
solutions on parameters were established by Kiguradze \cite{Kigyradze1975, Kigyradze1987}. Later, the accumulated results were developed by the second author of the present paper and his colleagues \cite{KodliukMikhailets2013JMS, MPR2018, MikhailetsChekhanova}. Recently, these investigations were extended to more general classes of Fredholm boundary-value problems in various Banach function spaces \cite{GKM2015, KodlyukM2013, GKM2017, MMS2016}. These problems have numerous specific features that are not typical of ordinary boundary-value problems and require the use of new approaches and methods. In the~present paper, we develop these approaches and methods.

\section{Statement of the problem}\label{section2}

Consider a finite segment $[a,b]\subset\mathbb{R}$ and given parameters
$$
\{m, n, r\} \subset \mathbb{N}, \quad 1\leqslant p\leqslant \infty.
$$
By $W_p^n:=W_p^n([a,b];\mathbb{C})$ we denote a complex Sobolev space and set $W_p^{0}:=L_p$. Similarly, by $(W_p^n)^{m}:=W_p^n([a,b];\mathbb{C}^{m})$ and $(W_p^n)^{m\times m}:=W_p^n([a,b];\mathbb{C}^{m\times m})$ we denote Sobolev spaces of vector functions and matrix functions, respectively, whose elements belong to the function space $W_p^n$. By $\|\cdot\|_{n, p}$ we denote the norms in these spaces. They are defined as the sums of~the corresponding norms of all elements of a vector-valued or matrix-valued function in $W_p^n$. The~space of functions (scalar functions, vector functions, or matrix functions) in which
the norm is introduced is always clear from the context. For $m=1$, all these spaces coincide. It is known that the spaces $W_p^n$ are Banach spaces. They are separable if and only if $p<\infty$.

Consider a linear boundary-value problem for the system of $m$ differentiable equations of the first order
\begin{equation}\label{1.6.1t}
    Ly(t):= y'(t)+A(t)y(t)=f(t),\quad
t \in(a,b),
\end{equation}
\begin{equation}\label{1.6.2t}
    By= c,
\end{equation}
\noindent where the matrix function $A(\cdot)$ belongs to the space $(W_{p}^{n-1})^{m\times m}$, the vector function $f(\cdot)$ belongs to the space $(W_{p}^{n-1})^{m}$, the vector $c$ belongs to the space $\mathbb{C}^{r}$, and~$B$ is a linear continuous operator
\begin{equation}\label{oper B}
  B\colon \ (W_{p}^{n})^{m} \rightarrow\mathbb{C}^{r}.
\end{equation}

We represent vectors and vector functions in the form of columns. A solution of the~boundary-value problem \eqref{1.6.1t}, \eqref{1.6.2t} is understood as a vector function $y(\cdot)\in (W_{p}^{n})^m$ satisfy\-ing equation~$(\ref{1.6.1t})$ almost everywhere on $(a,b)$ (everywhere for $n\geq 1$) and equality~(\ref{1.6.2t}) specifying $r$ scalar boundary conditions. The solutions of equation \eqref{1.6.1t} fill the space $(W_{p}^{n})^m$ if its right-hand side  $f(\cdot)$ runs through the space $(W_{p}^{n-1})^m$. This statement follows from Lemma \ref{3.sk} (see Section~\ref{section4}). Hence, the boundary condition \eqref{1.6.2t} is the most general condition for this equation and includes all known types of classical boundary conditions, namely, the Cauchy problem, two- and many-point problems, integral and mixed problems, and numerous nonclassical problems. The last class of problems may contain derivatives of the required functions of order $k\leqslant n$.

It follows from the known results of functional analysis \cite{Ioffe} that, for $1\leqslant p < \infty$, every operator~$B$ in \eqref{oper B} admits a single-valued analytic representation
\begin{equation}\label{st anal}
By=\sum _{k=0}^{n-1} \alpha_{k} y^{(k)}(a)+\int_{a}^b \Phi(t)y^{(n)}(t){\rm d}t, \quad y(\cdot)\in (W_{p}^{n})^{m},
\end{equation}
where the matrices $\alpha_{k}$ belong to $\mathbb{C}^{r\times m}$ and the matrix function $\Phi(\cdot)$ belongs to $L_{p^{'}}\big([a, b]; \mathbb{C}^{r\times m}\big)$, $$\frac{1}{p} + \frac{1}{p^{'}}=1.$$ For $p=\infty$, relation \eqref{st anal} also defines an operator $B\in L((W_{\infty}^{n})^{m}; \mathbb{C}^{r})$. However, there exist other operators from this class specified by the integrals over finitely additive measures \cite{Dunford}.

The main aim of the present paper is to prove the Fredholm property for problem \eqref{1.6.1t},~\eqref{1.6.2t} and to find its index. Moreover, we establish a criterion for the (everywhere) single-valued solvability of this problem.

\section{Main results}\label{section3}

We now formulate the main results of the present paper. They are proved in Section~5.

We rewrite the inhomogeneous boundary-value problem \eqref{1.6.1t}, \eqref{1.6.2t} in the form of a linear operator equation
\[ (L,B)y=(f,c), \]
where $(L,B)$ is a linear operator in the pair of Banach spaces
\begin{equation}\label{(L,B)}
(L,B)\colon (W^{n}_p)^m\rightarrow (W^{n-1}_p)^m\times\mathbb{C}^r.
\end{equation}
Recall that a linear continuous operator $T\colon X \rightarrow Y$, where $X$ and $Y$ are Banach operators, is~called a Fredholm operator if its kernel $\ker T$ and cokernel $Y/T(X)$ are finite-dimensional. If~this operator is Fredholm, then its range $T(X)$ is closed in $Y$ and the index
$$
\mathrm{ind}\,T:=\dim\ker T-\dim\big(Y/T(X)\big)
$$
is finite (see, e.g., \cite[Lemma~19.1.1]{Hermander1985}).

\begin{theorem}\label{th_fredh-bis}
The linear operator \eqref{(L,B)} is a bounded Fredholm operator with index $m-r$.
\end{theorem}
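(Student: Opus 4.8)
The plan is to prove Theorem~\ref{th_fredh-bis} by decomposing the operator $(L,B)$ into two pieces whose Fredholm properties are easy to analyze separately, and then to invoke the stability of the index under the appropriate operations. First I would study the operator $L\colon (W_p^n)^m\to(W_p^{n-1})^m$ on its own. By the variation-of-parameters formula, the inhomogeneous equation $y'+A(t)y=f$ with an initial condition $y(a)=y_0\in\mathbb{C}^m$ has a unique solution in $(W_p^n)^m$ for every $f\in(W_p^{n-1})^m$; this is exactly the content alluded to in the remark that ``the solutions of equation~\eqref{1.6.1t} fill the space $(W_p^n)^m$,'' which the paper attributes to Lemma~\ref{3.sk}. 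Consequently $L$ is surjective with kernel of dimension exactly $m$ (the space of solutions of the homogeneous system $y'+Ay=0$, parametrized by $y(a)$), so $L$ itself is Fredholm with $\dim\ker L=m$, $\operatorname{coker}L=0$, and $\operatorname{ind}L=m$.

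The core of the argument is then to understand how adjoining the boundary operator $B$ changes the index. I would fix a fundamental matrix $Y(\cdot)$ of the homogeneous system, so that $\ker L=\{Y(\cdot)\xi:\xi\in\mathbb{C}^m\}$, and set up the map $(L,B)$ explicitly on this $m$-dimensional kernel: the restriction $B|_{\ker L}$ is represented by the $r\times m$ matrix $M:=B[Y(\cdot)]$ obtained by applying $B$ columnwise to $Y$. A direct computation then identifies both the kernel and the cokernel of $(L,B)$ in terms of $M$. Since $L$ is onto, a pair $(f,c)$ lies in the range of $(L,B)$ if and only if, writing $y_f$ for any particular solution of $Ly=f$, the residual $c-By_f$ lies in the column space of $M$; hence $\operatorname{coker}(L,B)\cong\mathbb{C}^r/\operatorname{ran}M$, which has dimension $r-\operatorname{rank}M$. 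Likewise $\ker(L,B)=\{Y(\cdot)\xi:M\xi=0\}$, so $\dim\ker(L,B)=m-\operatorname{rank}M$. Subtracting gives
\begin{equation}\label{index-computation}
\operatorname{ind}(L,B)=(m-\operatorname{rank}M)-(r-\operatorname{rank}M)=m-r,
\end{equation}
independently of $B$, which is precisely the claimed index.

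Finiteness of both dimensions in \eqref{index-computation} is immediate once $M$ is a finite matrix, and that in turn rests on $\dim\ker L=m<\infty$; so the Fredholm property follows from the same computation rather than requiring a separate argument. The remaining routine points are boundedness of $(L,B)$, which is clear since $L$ is a bounded differential-plus-multiplication operator on Sobolev spaces and $B$ is continuous by hypothesis~\eqref{oper B}, and closedness of the range, which is automatic for any operator with finite-dimensional cokernel. The step I expect to be the main obstacle is the careful verification that $L$ is surjective onto $(W_p^{n-1})^m$ with an $m$-dimensional kernel inside $(W_p^n)^m$: one must check that the variation-of-parameters solution genuinely gains one order of Sobolev regularity (so that $f\in(W_p^{n-1})^m$ forces $y\in(W_p^n)^m$) and that this regularity estimate is uniform, which is exactly why the explicit formula and the mapping property recorded in Lemma~\ref{3.sk} are needed. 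Once $L$ is pinned down as a surjective Fredholm operator of index $m$, the passage to $(L,B)$ via the matrix $M$ is elementary linear algebra.
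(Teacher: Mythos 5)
Your argument is correct, but it follows a genuinely different route from the paper. The paper proves Theorem~\ref{th_fredh-bis} by a perturbation argument: it writes $(L,B)=(L,C_{r,m})+(0,B-C_{r,m})$ for a fixed simple boundary operator $C_{r,m}$ built from initial values $y(a)$, observes that the second summand has finite rank, invokes Kato's stability theorem to reduce to $(L,C_{r,m})$, and then computes kernel and range of $(L,C_{r,m})$ explicitly in three cases ($r=m$, $r>m$, $r<m$) using unique solvability of the Cauchy problem. You instead work with the general $B$ directly: using the surjectivity of $L$ and the fact that $\ker L$ is the $m$-dimensional space spanned by the columns of the matricant $Y(\cdot)$ (both resting on Lemma~\ref{3.sk} and Theorem~\ref{th1}, the same analytic input the paper uses), you identify $\ker(L,B)$ with $\ker[BY]$ and $\operatorname{coker}(L,B)$ with $\mathbb{C}^r/\operatorname{ran}[BY]$, obtaining the exact dimensions $m-\operatorname{rank}[BY]$ and $r-\operatorname{rank}[BY]$ whose difference is $m-r$ regardless of $B$. (The identity $B(Y(\cdot)\xi)=[BY]\xi$ that you use implicitly is the paper's Lemma~\ref{dija oper}, extended in the obvious way to $r\neq m$.) The paper's approach buys a short reduction that sidesteps any analysis of a general $B$ at the cost of an external stability theorem and a case split; your approach is more self-contained and more informative, since the explicit kernel/cokernel formulas immediately yield Theorem~\ref{th_invertible-bis} as a corollary: invertibility holds exactly when both dimensions vanish, i.e.\ $r=m$ and $[BY]$ is nonsingular.
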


We formulate a criterion for the invertibility of the operator $(L,B)$, i.e., the condition under which the inhomogeneous boundary-value problem \eqref{1.6.1t}, \eqref{1.6.2t} possesses a unique solution and this solution continuously depends on the right-hand sides of the differential equation and the~boundary condition.

By $Y(\cdot)\in (W_p^n)^{m\times m}$ we denote a unique solution of a linear homogenous matrix equation of the form \eqref{1.6.1t} with the following Cauchy initial condition:{\samepage
\begin{equation}\label{r3}
   Y'(t)+A(t)Y(t)=0,\quad t\in (a,b), \quad Y(a)=I_{m},
  \end{equation}
\noindent where $I_m$ is the $m \times m$ identity matrix.}

In the case where $r=m$, we set
\begin{equation}\label{3.BY}
[BY]:=\left( B \begin{pmatrix}
                                              y_{1,1}(\cdot) \\
                                              \vdots \\
                                              y_{m,1}(\cdot) \\
                                            \end{pmatrix}
\ldots
                                    B \begin{pmatrix}
                                              y_{1,m}(\cdot) \\
                                              \vdots \\
                                              y_{m,m}(\cdot) \\
                                            \end{pmatrix}\right).
\end{equation}
The numerical square matrix $[BY]$ of order $m$ is formed as a result of the action of the operator~$B$ upon the corresponding
columns (with the same numbers) of the matricant $Y(\cdot)$ of the matrix Cauchy problem~\eqref{r3}.

\begin{theorem}\label{th_invertible-bis}
The operator $(L,B)$ is invertible if and only if $r=m$ and the matrix $\left[BY\right]$ is nondegenerate.
\end{theorem}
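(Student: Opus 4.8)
The plan is to leverage Theorem~\ref{th_fredh-bis}, which already furnishes the Fredholm property together with the value $m-r$ of the index, and to reduce the question of invertibility to a finite-dimensional condition on the kernel.

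First I would settle the necessity of the dimension condition $r=m$. An invertible operator has trivial kernel and cokernel, hence index $0$; since Theorem~\ref{th_fredh-bis} gives $\mathrm{ind}\,(L,B)=m-r$, invertibility forces $m-r=0$, that is, $r=m$. Conversely, as soon as $r=m$ the operator $(L,B)$ is Fredholm of index $0$, so $\dim\ker(L,B)=\dim\big((W_p^{n-1})^m\times\mathbb{C}^m\big/(L,B)((W_p^n)^m)\big)$. Because the range of a Fredholm operator is closed, injectivity is then equivalent to surjectivity, and a bounded bijection between Banach spaces has a bounded inverse by the open mapping theorem. Thus, under $r=m$, it suffices to decide exactly when $(L,B)$ is injective.

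Next I would describe the kernel explicitly. If $(L,B)y=0$, then $y$ solves the homogeneous system $y'+Ay=0$; by uniqueness for the linear Cauchy problem, every such solution has the form $y(\cdot)=Y(\cdot)\xi$ with $\xi:=y(a)\in\mathbb{C}^m$, where $Y$ is the matricant introduced in~\eqref{r3}. Writing $\xi=(\xi_1,\dots,\xi_m)^{\top}$, denoting by $Y_j$ the $j$-th column of $Y$, and using the linearity of $B$ together with definition~\eqref{3.BY}, I obtain $By=\sum_{j=1}^m\xi_j\,BY_j=[BY]\,\xi$. Hence $y\in\ker(L,B)$ if and only if $[BY]\,\xi=0$.

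Finally I would conclude. The assignment $\xi\mapsto Y(\cdot)\xi$ is injective, since $Y(a)=I_m$, so it carries $\ker[BY]$ isomorphically onto $\ker(L,B)$, whence $\dim\ker(L,B)=\dim\ker[BY]$. Therefore $(L,B)$ is injective precisely when the square matrix $[BY]$ has trivial kernel, i.e. when $[BY]$ is nondegenerate; combining this with the index-$0$ equivalence of injectivity and invertibility established above yields the theorem in both directions. I do not anticipate a serious obstacle here: the one point requiring care is the identity $By=[BY]\,\xi$, where one must verify that applying $B$ columnwise to the matricant is consistent with the analytic representation~\eqref{st anal} and that the resulting $m\times m$ matrix is well defined.
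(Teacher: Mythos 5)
Your proposal is correct and follows essentially the same route as the paper: reduce via the index formula of Theorem~\ref{th_fredh-bis} to the case $r=m$ with trivial kernel, parametrize the kernel of $L$ by the matricant as $y=Y(\cdot)\xi$, and use the identity $B(Y(\cdot)\xi)=[BY]\xi$ (which the paper isolates as Lemma~\ref{dija oper}) to identify $\ker(L,B)$ with $\ker[BY]$. The one point you flag as needing care is exactly that identity, and your inline derivation of it from the linearity of $B$ matches the paper's lemma.
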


\section{Auxiliary results}\label{section4}

We now establish several auxiliary statements and use them to prove Theorems \ref{th_fredh-bis}~and~\ref{th_invertible-bis} in Section~\ref{section5}. Some of
them are of independent interest.

\begin{lemma}\label{3.sk}
Suppose that the matrix function $A$ belongs to $(W_{p}^{n-1})^{m\times m}$. If a differentiable function $y\colon [a,b]\to\mathbb{C}^{m}$ is a solution of equation \eqref{1.6.1t} for some right-hand side $f\in(W_{p}^{n-1})^{m}$, then $y$ belongs to $\in(W_{p}^{n})^{m}$. Moreover, if $f$ runs through the entire space
$(W_{p}^{n-1})^{m}$, then solutions of equation~\eqref{1.6.1t} run through the entire space $(W_{p}^{n})^{m}$.
\end{lemma}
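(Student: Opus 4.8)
The plan is to establish the regularity claim by a bootstrap induction on the order of smoothness, handling the lowest level separately because $W_p^{0}=L_p$ is not closed under multiplication, and then to deduce the ``filling'' statement by exhibiting, for each prescribed target $y$, the right-hand side that produces it.

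First I would rewrite the equation as $y'=f-Ay$ and settle the base level. Since $y$ is differentiable and hence continuous on the compact segment, $y\in(L_\infty)^{m}$; the pointwise bound $\|A(t)y(t)\|\leqslant\|A(t)\|\,\|y\|_{\infty}$ together with $A\in(W_{p}^{n-1})^{m\times m}\subset(L_{p})^{m\times m}$ yields $Ay\in(L_{p})^{m}$, and since $f\in(L_{p})^{m}$ the classical derivative $y'=f-Ay$ lies in $(L_{p})^{m}\subset(L_{1})^{m}$. A function differentiable everywhere whose derivative is Lebesgue integrable is absolutely continuous and satisfies the fundamental theorem of calculus, so $y$ is absolutely continuous with weak derivative $y'\in(L_{p})^{m}$; that is, $y\in(W_{p}^{1})^{m}$. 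This already proves the case $n=1$.

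Next I would run the induction. Suppose $y\in(W_{p}^{k})^{m}$ for some $1\leqslant k\leqslant n-1$. For $k\geqslant 1$ the space $W_{p}^{k}([a,b];\mathbb{C})$ embeds continuously into $C([a,b])$ and is a Banach algebra, so matrix--vector multiplication is bounded from $(W_{p}^{k})^{m\times m}\times(W_{p}^{k})^{m}$ into $(W_{p}^{k})^{m}$. Since $A\in(W_{p}^{n-1})^{m\times m}\subset(W_{p}^{k})^{m\times m}$ and $f\in(W_{p}^{n-1})^{m}\subset(W_{p}^{k})^{m}$, we obtain $y'=f-Ay\in(W_{p}^{k})^{m}$, whence $y\in(W_{p}^{k+1})^{m}$. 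Iterating from $k=1$ up to $k=n-1$ gives $y\in(W_{p}^{n})^{m}$, which proves the first assertion.

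Finally, for the filling statement I would note that $L$ maps $(W_{p}^{n})^{m}$ into $(W_{p}^{n-1})^{m}$: for an arbitrary $y\in(W_{p}^{n})^{m}$ one has $y'\in(W_{p}^{n-1})^{m}$, while $Ay\in(W_{p}^{n-1})^{m}$ by the same Banach-algebra property when $n-1\geqslant 1$ (and by the $L_\infty\cdot L_p$ estimate of the base step when $n=1$), so $f:=Ly=y'+Ay\in(W_{p}^{n-1})^{m}$. Thus every $y\in(W_{p}^{n})^{m}$ solves the equation for this particular $f$, and conversely every solution lies in $(W_{p}^{n})^{m}$ by the first part; hence the solution set is exactly $(W_{p}^{n})^{m}$ as $f$ ranges over $(W_{p}^{n-1})^{m}$. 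I expect the only delicate point to be the base step, namely producing genuine membership in $W_{p}^{1}$ (absolute continuity, not merely everywhere differentiability with an $L_p$ derivative) and controlling the product $Ay$ without an algebra structure at the $L_p$ level; the higher steps are routine once the Banach-algebra property of $W_{p}^{k}$ for $k\geqslant 1$ is invoked.
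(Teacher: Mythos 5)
Your proof is correct and follows essentially the same route as the paper's: a bootstrap on the order of smoothness using the Banach-algebra property of $W_p^k$ for $k\geqslant 1$, followed by the observation that $L$ maps $(W_p^n)^m$ into $(W_p^{n-1})^m$ to get the ``filling'' claim. Your base step is in fact handled more carefully than in the paper, which asserts that $A$ and $f$ are continuous (false when $n=1$), whereas you use the $L_\infty\cdot L_p$ bound on $Ay$ together with the classical theorem that an everywhere-differentiable function with Lebesgue-integrable derivative is absolutely continuous.
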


\begin{proof} Assume that, for some $f\in (W_{p}^{n-1})^{m}$, a differentiable vector function $y$ is a solution of equation~\eqref{1.6.1t}. We prove that $y$ belongs to $\in(W_{p}^{n})^{m}$. In view of the fact that $A$ and $f$ are at least continuous on $[a,b]$, we find
$$y'=f-Ay\in\bigl(C^{(0)}\bigr)^{m}.$$
This implies that $y$ belongs to $\bigl(C^{(1)}\bigr)^{m}\subset(L_{p})^{m}$. Moreover,
\begin{equation}\label{3.impl}
\bigl(\,y'\in(W_{p}^{n-1})^{m}\;\Rightarrow\;
y\in(W_{p}^{n})^{m}\,\bigr).
\end{equation}
Indeed, if $y'$ belongs to $(W_{p}^{n-1})^{m}$ for an integer number $n$, then
$$
y'=f-Ay\in(W_{p}^{n-1})^{m}.
$$
Hence, $y$ belongs to $(W_{p}^{n})^{m}$. The inclusion $y\in(L_{p})^{m}$ and property \eqref{3.impl} yield the required inclusion $y\in(W_{p}^{n})^{m}$.

We now prove the last assertion of the lemma. For any $f\in(W_{p}^{n-1})^{m}$, a solution $y$ of equation~\eqref{1.6.1t} exists. As shown above, $y$ belongs to $(W_{p}^{n})^{m}$. In view of the obvious implication
$$
y\in(W_{p}^{n})^{m}\;\Rightarrow\;Ly\in(W_{p}^{n-1})^{m},
$$
this proves the last assertion of the lemma.\end{proof}

\begin{lemma}\label{l1} Suppose that the matrix function $Y(\cdot)\in (W_p^n)^{m\times m}$ is nondegenerate for every $t\in [a, b]$. Then
the inverse matrix function $Y^{-1}(\cdot)$ belongs to $(W_p^n)^{m\times m}$.
\end{lemma}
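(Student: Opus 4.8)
The plan is to invoke Cramer's rule, writing $Y^{-1}(t)=(\det Y(t))^{-1}\,\mathrm{adj}\,Y(t)$, and to verify that each factor remains in $W_p^n$. First I would recall that on the compact one-dimensional interval $[a,b]$, for $n\ge 1$, the space $W_p^n$ embeds continuously into $C^{(n-1)}[a,b]$ and is a Banach algebra under pointwise multiplication (this follows from the embedding together with the Leibniz rule). Consequently, every polynomial expression in the entries of $Y(\cdot)$ again lies in $W_p^n$; in particular, each entry of the adjugate matrix $\mathrm{adj}\,Y(\cdot)$ — a polynomial of degree $m-1$ in the entries $y_{j,k}$ — and the determinant $\det Y(\cdot)$ belong to $W_p^n$.

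Next I would exploit the nondegeneracy hypothesis. Since $W_p^n\subset C^{(0)}[a,b]$, the scalar function $g:=\det Y$ is continuous, and by assumption $g(t)\neq 0$ for every $t\in[a,b]$. Compactness of $[a,b]$ then yields a constant $\delta>0$ with $|g(t)|\ge\delta$ on $[a,b]$, so $1/g$ is at least a well-defined continuous function.

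The crux of the argument — and the step I expect to be the main obstacle — is to show that this reciprocal $1/g$ again belongs to $W_p^n$. I would prove by induction on $n$ the auxiliary claim: if $g\in W_p^n$ never vanishes on $[a,b]$, then $1/g\in W_p^n$. In the base case $n=1$, the function $g$ is absolutely continuous with $g'\in L_p$; then $1/g$ is continuous and $(1/g)'=-g'/g^2$ is the product of the bounded function $g^{-2}$ with $g'\in L_p$, hence lies in $L_p$, so $1/g\in W_p^1$. For the inductive step, if $g\in W_p^n$, then $g\in W_p^{n-1}$, and the inductive hypothesis gives $1/g\in W_p^{n-1}$; using the algebra property, $(1/g)^2\in W_p^{n-1}$ and $g'\in W_p^{n-1}$, so $(1/g)'=-g'\,(1/g)^2\in W_p^{n-1}$; the recursive characterization of the Sobolev space — the implication of the type \eqref{3.impl}, namely that $u\in L_p$ together with $u'\in W_p^{n-1}$ forces $u\in W_p^n$ — then yields $1/g\in W_p^n$.

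Finally I would assemble the pieces: each entry of $Y^{-1}=(\det Y)^{-1}\,\mathrm{adj}\,Y$ is the product of $1/g\in W_p^n$ with an adjugate entry in $W_p^n$, hence lies in $W_p^n$ by the algebra property, which proves $Y^{-1}(\cdot)\in(W_p^n)^{m\times m}$. The delicate point throughout is the stability of $W_p^n$ under the nonlinear operations of forming reciprocals and products; once the Banach-algebra property and the reciprocal claim are established, the determinant and adjugate bookkeeping is routine.
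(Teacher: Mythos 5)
Your proof is correct and takes essentially the same route as the paper: the paper first establishes your reciprocal claim as the scalar case $m=1$ by the same induction on $n$ using $(Y^{-1})'=-Y'Y^{-2}$ together with the Banach-algebra property of $W_p^n$, and then handles $m\ge 2$ via the adjugate/determinant formula exactly as you do. Your write-up is, if anything, slightly more explicit about why $\det Y$ is bounded away from zero and about the recursive step from $(1/g)'\in W_p^{n-1}$ to $1/g\in W_p^n$.
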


\begin{proof} We first prove the lemma in the scalar case $m=1$ by induction on $n\in\mathbb{N}$.

Let $n=1$. By the condition, the function $Y(\cdot)$ belongs to $W_p^1$. Hence, it is absolutely continuous and unequal to zero on the set $[a, b]$.
This implies that the function $Y^{-1}(\cdot)$ is differentiable almost everywhere and, in addition, $$\left(Y^{-1}\right)'(\cdot)=-Y'(\cdot)Y^{-2}(\cdot).$$
Since the function $Y(\cdot)$ is separate from zero and $Y'(\cdot)$ belongs to $L_p$, the function $\left(Y^{-1}\right)'(\cdot)$ belongs to $L_p$. Hence, $Y(\cdot)^{-1}$ belongs to $W_p^1$.

Assume that the assertion of Lemma \ref{l1} is true for a certain number $n=k\in\mathbb{N}$. It is necessary to prove that it remains true for $n=k+1$. By the
 condition, \smash{$Y(\cdot)$ belongs to $W_p^{k+1}$} and $Y(t)\neq\{0\}$ for any $t\in [a, b]$. Hence, by the inductive assumption,
 $Y^{-1}(\cdot)$ belongs to $W_p^{k}$. Thus, the function $$\left(Y^{-1}\right)'(\cdot)=-Y'(\cdot)Y^{-2}(\cdot)$$ belongs to the space $W_p^{k}$
 because it is a Banach algebra. Therefore, $Y^{-1}(\cdot)$ belongs to $W_p^{k+1}$.

Thus, for $m=1$ the lemma is proved.

We now prove the lemma for $m\geq 2$. It is known that
\begin{equation}\label{r2}
Y^{-1}(t) = \frac{1}{\det Y(t)} Y^T (t).
\end{equation}
Here, $Y^T (\cdot)$ is the transposed matrix function formed by the cofactors of elements of the matrix function $Y(\cdot)$. By the condition, $Y(\cdot)$ belongs to $(W_p^{k+1})^{m\times m}$. Thus, $Y^T (\cdot)$ belongs to $(W_p^{k+1})^{m\times m}$ because the functional class $W_p^{k+1}$ forms a Banach algebra. By using the result established above and equality (\ref{r2}), we conclude that $Y^{-1}(\cdot)$ also belongs to $(W_p^{k+1})^{m\times m}$.
\end{proof}

Further, we introduce a metric space of nondegenerate matrix functions
$$ \mathcal{Y}_{p}^{n}:=\{Y(\cdot)\in (W_{p}^{n})^{m\times m}\colon \,Y(a)=I_{m},\quad
  \det Y(t)\neq 0 \quad \forall t\in [a, b]\}$$
\noindent with the metric
 $$d_{n, p}(Y,Z):=\|Y(\cdot)-Z(\cdot)\|_{n,p}.$$

\begin{theorem}\label{th1} A nonlinear mapping
\begin{equation}\label{rr6}
A(\cdot)\mapsto Y(\cdot)
\end{equation}
that associates every matrix function $A(\cdot) \in (W_p^{n-1})^{m\times m}$ with a unique solution $Y(\cdot)$ of the~matrix Cauchy problem \eqref{r3} is a homeomorphism of the Banach space $(W_{p}^{n-1})^{m\times m}$ onto the~metric space $\mathcal{Y}_{p}^{n}$.
\end{theorem}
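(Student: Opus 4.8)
The plan is to verify in turn that the mapping \eqref{rr6} is well defined as a map into $\mathcal{Y}_p^n$, that it is a bijection with an explicit inverse, and finally that both it and its inverse are continuous. That $A(\cdot)\mapsto Y(\cdot)$ lands in $\mathcal{Y}_p^n$ is almost immediate: for each $A\in(W_p^{n-1})^{m\times m}$ the linear Cauchy problem \eqref{r3} has a unique solution, each of whose columns solves the homogeneous version of \eqref{1.6.1t} and hence belongs to $(W_p^n)^m$ by Lemma~\ref{3.sk}; the solution satisfies $Y(a)=I_m$ by construction and obeys the Liouville identity $\det Y(t)=\exp\!\big(-\int_a^t\operatorname{tr}A(s)\,{\rm d}s\big)\neq 0$ for every $t\in[a,b]$. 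Hence $Y\in\mathcal{Y}_p^n$.

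For bijectivity I would produce the inverse explicitly. Given any $Y\in\mathcal{Y}_p^n$, Lemma~\ref{l1} shows $Y^{-1}\in(W_p^n)^{m\times m}$, and since $Y'\in(W_p^{n-1})^{m\times m}$, the product $A:=-Y'Y^{-1}$ lies in $(W_p^{n-1})^{m\times m}$: for $n\geq 2$ this uses that $W_p^{n-1}$ is a Banach algebra, while for $n=1$ it uses that the product of an $L_p$ matrix function with a continuous one is again in $L_p$. A direct check shows that this $A$ is the unique coefficient for which $Y$ solves \eqref{r3}, so $Y\mapsto -Y'Y^{-1}$ is a two-sided inverse of \eqref{rr6}; in particular the mapping is a bijection of $(W_p^{n-1})^{m\times m}$ onto $\mathcal{Y}_p^n$.

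The substantive part is continuity. For the forward map $A\mapsto Y$ I would argue that $A_j\to A$ in $(W_p^{n-1})^{m\times m}$ forces $Y_j\to Y$ in $(W_p^n)^{m\times m}$. Writing the problem as the integral equation $Y(t)=I_m-\int_a^t A(s)Y(s)\,{\rm d}s$ and subtracting the two copies yields, via Gronwall's inequality, the bound $\|Y_j-Y\|_{C^0}\to 0$ (the sequence $\|A_j\|_{n-1,p}$ being bounded). This is only convergence in $C^0$, hence in $W_p^0=L_p$; to reach the full $W_p^n$-norm I would bootstrap through the differential equation itself. From
\[
Y_j'-Y'=-A_j\,(Y_j-Y)-(A_j-A)\,Y
\]
one sees that control of $Y_j-Y$ in $W_p^k$ upgrades to control of $Y_j'-Y'$ in $W_p^k$, hence of $Y_j-Y$ in $W_p^{k+1}$: for $k\geq 1$ this is the Banach-algebra estimate in $W_p^k$, while the initial passage from $L_p$ to $W_p^1$ uses the $L_p$-times-$C^0$ product bound together with the already established uniform convergence $\|Y_j-Y\|_{C^0}\to0$. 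Iterating from $k=0$ up to $k=n-1$ gives $\|Y_j-Y\|_{n,p}\to 0$. I expect this bootstrap — reconciling the Gronwall estimate, which naturally lives in $C^0$, with the successive gain of one Sobolev derivative — to be the main technical obstacle.

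Continuity of the inverse $Y\mapsto -Y'Y^{-1}$ is more algebraic. Differentiation is bounded from $W_p^n$ into $W_p^{n-1}$, so $Y_j'\to Y'$ there. For the inverse matrix I would use the cofactor representation \eqref{r2}: the entries of the adjugate and $\det Y$ are polynomials in the entries of $Y$, hence depend continuously on $Y$ in the Banach algebra $W_p^n$, and since $\det Y$ is continuous and nonvanishing on the compact interval it is bounded away from zero, so that $(\det Y_j)^{-1}\to(\det Y)^{-1}$ in $W_p^n$ by continuity of reciprocation about an invertible element of a Banach algebra. Multiplying, $Y_j^{-1}\to Y^{-1}$ in $W_p^n$, and one further product (again the Banach-algebra estimate for $n\geq 2$, and the $L_p$-times-$C^0$ bound for $n=1$) gives $-Y_j'Y_j^{-1}\to -Y'Y^{-1}$ in $(W_p^{n-1})^{m\times m}$. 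Together with the three preceding steps this establishes that \eqref{rr6} is a homeomorphism.
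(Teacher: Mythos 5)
Your proposal is correct and follows essentially the same route as the paper, which organizes the proof into the same three parts: bijectivity via the inverse formula $A=-Y'Y^{-1}$ together with Lemma~\ref{l1} and the Liouville--Jacobi formula, forward continuity by uniform convergence of matricants followed by a bootstrap through the differential equation using the Banach-algebra property of $W_p^k$, and continuity of the inverse map from the same formula. The only cosmetic differences are that you obtain the uniform convergence from Gronwall's inequality where the paper cites Tamarkin's lemma, you prove bijectivity directly rather than by induction on $n$, and you spell out the continuity of $Y\mapsto Y^{-1}$ (via the adjugate and inversion in a Banach algebra) more explicitly than the paper, which attributes it to Lemma~\ref{l1}.
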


We split the proof of the theorem into three parts.

\begin{lemma}\label{l2} The nonlinear mapping \eqref{rr6} is a bijection of the space $(W_{p}^{n-1})^{m\times m}$ onto the metric space $\mathcal{Y}_{p}^{n}$.
\end{lemma}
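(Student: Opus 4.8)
The plan is to verify the three defining properties of a bijection separately: that the map is well defined as a map into $\mathcal{Y}_p^n$, that it is injective, and that it is surjective. The two auxiliary lemmas already proved, namely Lemma~\ref{3.sk} and Lemma~\ref{l1}, will carry most of the analytic weight, so the argument is largely a matter of assembling them correctly.

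First I would check that the map is well defined. Since $A(\cdot)\in(W_p^{n-1})^{m\times m}$ is at least integrable (and continuous when $n\geq 2$), the linear Cauchy problem~\eqref{r3} has a unique solution $Y(\cdot)$ by the classical existence and uniqueness theorem for linear systems with integrable coefficients. Applying Lemma~\ref{3.sk} columnwise to~\eqref{r3} with zero right-hand side upgrades the regularity of this solution to the inclusion $Y(\cdot)\in(W_p^n)^{m\times m}$. It remains to see that $Y(t)$ is nondegenerate for every $t\in[a,b]$; this I would obtain from the Liouville formula
$$\det Y(t)=\exp\Bigl(-\int_a^t \operatorname{tr} A(s)\,{\rm d}s\Bigr),$$
whose right-hand side never vanishes. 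Together with $Y(a)=I_m$, these facts place $Y(\cdot)$ in $\mathcal{Y}_p^n$.

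Injectivity is immediate: if two coefficient matrices $A_1$ and $A_2$ produced the same matricant $Y$, then subtracting the two instances of~\eqref{r3} gives $\bigl(A_1(t)-A_2(t)\bigr)Y(t)=0$ for all $t$, and multiplying on the right by the invertible matrix $Y^{-1}(t)$ forces $A_1\equiv A_2$.

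The heart of the argument is surjectivity. Given an arbitrary $Y(\cdot)\in\mathcal{Y}_p^n$, I would define the candidate coefficient matrix by solving~\eqref{r3} for $A$, namely
$$A(\cdot):=-Y'(\cdot)\,Y^{-1}(\cdot).$$
By construction this $Y$ then satisfies~\eqref{r3} with initial value $I_m$, so by uniqueness it is exactly the matricant associated with this $A$; the only thing to verify is that $A$ lands in the correct space $(W_p^{n-1})^{m\times m}$. Here is where the main obstacle lies, and where Lemma~\ref{l1} is essential: since $Y(\cdot)$ is nondegenerate at every point, that lemma gives $Y^{-1}(\cdot)\in(W_p^n)^{m\times m}$, while differentiation gives $Y'(\cdot)\in(W_p^{n-1})^{m\times m}$. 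The product of these two factors then lies in $(W_p^{n-1})^{m\times m}$, using that $W_p^{n-1}$ is a Banach algebra when $n\geq 2$ and, in the borderline case $n=1$, that $Y^{-1}(\cdot)$ is continuous and hence bounded, so that the product of an $L_p$ factor with it remains in $L_p$. This completes the verification that the map is onto $\mathcal{Y}_p^n$, and hence a bijection.
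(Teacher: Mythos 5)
Your proof is correct, and it reorganizes the argument along a route that differs from the paper's. The paper proves the lemma by induction on $n$: the base case $n=1$ establishes $Y(\cdot)\in(W_p^1)^{m\times m}$, injectivity, and surjectivity by hand (via the Liouville--Jacobi formula, the inversion identity \eqref{r5}, and Lemma~\ref{l1}), and the inductive step upgrades regularity in both directions using the Banach algebra property of $W_p^k$ for $k\geq 1$. You avoid the induction entirely: for well-definedness you outsource the regularity bootstrap to Lemma~\ref{3.sk} applied columnwise with $f=0$ (which is exactly how the paper itself invokes that lemma in Section~\ref{section2}), and for surjectivity you argue directly from Lemma~\ref{l1} together with the Banach algebra property of $W_p^{n-1}$, handling the borderline case $n=1$ (where $L_p$ is not an algebra) by the boundedness of the continuous factor $Y^{-1}(\cdot)$ --- a case the paper instead absorbs into its base step. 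Your injectivity argument is also slightly different and, if anything, more to the point: the paper deduces injectivity from the uniqueness of the solution of the Cauchy problem, which strictly speaking only gives single-valuedness of the map \eqref{rr6}, whereas you show that $\bigl(A_1(t)-A_2(t)\bigr)Y(t)=0$ together with the invertibility of $Y(t)$ forces $A_1=A_2$. Both routes rest on the same three pillars (the Liouville formula, the formula $A=-Y'Y^{-1}$, and Lemma~\ref{l1}); yours is shorter because Lemma~\ref{3.sk} already contains the inductive content, while the paper's version is self-contained at the cost of repeating that bootstrap inline.
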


\begin{proof} We prove the lemma by induction on the parameter $n\in\mathbb{N}$.

 First, we prove Lemma \ref{l2} for $n=1$. Let $A(\cdot)$ belongs to $(L_p)^{m\times m}$ and let $Y(\cdot)$ be a unique solution of problem \eqref{r3}. Since $Y(\cdot)$ belongs to $(L_p)^{m\times m}$ (as a continuous function) and \smash{$Y'(\cdot)=-A(\cdot)Y(\cdot)$} belongs to $(L_p)^{m\times m}$, we conclude that $Y(\cdot)$ belongs to $(W_p^1)^{m\times m}$. Since the~solution of the Cauchy problem (\ref{r3}) is unique, $Y(\cdot)\in \mathcal{Y}_{p}^{1}$ is uniquely determined by the coefficient $A(\cdot)\in (L_p)^{m\times m}$. Hence, mapping \eqref{rr6} is injective.

We now prove surjectivity of the mapping. By the Liouville--Jacobi formula (see, e.g.,~\cite{Yak}), we find
$$ \operatorname{det} Y(t)=\operatorname{det} Y(a)\operatorname{exp}\left(\int_{a}^t \operatorname{sp}A(s){\rm d}s\right)=
\operatorname{exp}\left(\int_{a}^t \operatorname{sp} A(s){\rm d}s\right)\neq 0.$$
\noindent Hence, the matrix $Y(t)$ is nondegenerate for any $t\in [a, b]$. Then there exists an inverse matrix $Y^{-1}(t)$ and equation (\ref{r3}) can be rewritten in the form:
\begin{equation}\label{r5}
A(t)=-Y'(t) Y^{-1}(t).
\end{equation}

At the same time, $Y'(\cdot)$ belongs to $(L_p)^{m\times m}$ and, by Lemma \ref{l1}, $Y^{-1}(\cdot)$ belongs to $(W_{p}^{1})^{m\times m}$. Thus, the product of these matrix function $A(\cdot)\in (L_p)^{m\times m}$, and mapping \eqref{rr6} is surjective. Therefore, this mapping is actually bijective.

 Assume that assertion of Lemma \ref{l2} is true for a certain number $n=k\in \mathbb{N}$. It is necessary to prove that it remains true for $n=k+1$. Let $$A(\cdot)\in(W_{p}^{k})^{m\times m}\subset (W_{p}^{k-1})^{m\times m}$$ and let $Y(\cdot)$ be a unique solution of problem \eqref{r3}. By the inductive assumption, $Y(\cdot)$ belongs to $\mathcal{Y}_{p}^{k}$. Hence, $$Y'(\cdot)=-A(\cdot) Y(\cdot)\in (W_{p}^{k})^{m\times m}$$ because $W_{p}^{k}$ is a Banach algebra. Thus, $Y(\cdot)$ belongs to $(W_p^{k+1})^{m\times m}$. Since the solution of the~Cauchy problem is unique, $Y(\cdot)$ belongs to $\mathcal{Y}_{p}^{k+1}$, i.e., mapping \eqref{rr6} is injective for $n=k+1$.

We now prove the surjectivity of the mapping. Since $Y(\cdot)$ belongs to $(W_{p}^{k+1})^{m\times m}$, the derivative $Y'(\cdot)$ belongs to $(W_{p}^{k})^{m\times m}$, and, by Lemma \ref{l1}, we arrive at the inclusion $$Y^{-1}(\cdot)\in (W_{p}^{k+1})^{m\times m}.$$ Thus, the product $-Y'(\cdot) Y^{-1}(\cdot)$ belongs to $(W_{p}^{k})^{m\times m}$. Since equality (\ref{r5}) is true, the matrix function $A(\cdot)$ also belongs to the Banach space~$(W_{p}^{k})^{m\times m}$. This implies that each matricant $Y(\cdot)\in \mathcal{Y}_{p}^{k+1}$ is associated, by relation~\eqref{r5}, with a matrix function $A(\cdot)\in (W_{p}^{k})^{m\times m}$.
\end{proof}

\begin{lemma}\label{l3} The solution $Y(\cdot)\in \mathcal{Y}_{p}^{n+1}$ of problem \eqref{r3} continuously depends on the coefficient \smash{$A(\cdot)\in (W_{p}^{n})^{m\times m}$}.
\end{lemma}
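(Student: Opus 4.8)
The plan is to prove sequential continuity of the forward map $A(\cdot)\mapsto Y(\cdot)$ by induction on $n$, exploiting the identity $Y'=-AY$ to transfer regularity from $A$ to $Y$, together with the Banach algebra structure of $W_p^{k}$ for $k\geq 1$. I fix a sequence $A_j\to A$ in $(W_p^{n})^{m\times m}$ and denote by $Y_j$ and $Y$ the corresponding matricants, which lie in $\mathcal{Y}_p^{n+1}$ by Lemma~\ref{l2}; the goal is to show $\|Y_j-Y\|_{n+1,p}\to 0$, which is exactly convergence in the metric $d_{n+1,p}$.

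First I would settle the base level $n=0$. Writing the Cauchy problem as the integral equation $Y(t)=I_m-\int_a^t A(s)Y(s)\,{\rm d}s$ and subtracting the analogous equation for $Y_j$, the difference $Y_j-Y$ satisfies an integral inequality whose kernel involves $\|A(\cdot)\|$ and whose inhomogeneous term is controlled by $\|A_j-A\|_{0,1}\cdot\sup_t\|Y_j(t)\|$. A Gronwall argument then yields $\sup_t\|Y_j(t)-Y(t)\|\to 0$; here I also record that $\sup_t\|Y_j(t)\|$ is uniformly bounded along the sequence, since $A_j$ is bounded in $L_p\hookrightarrow L_1$ on the compact interval $[a,b]$. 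This gives $Y_j\to Y$ in $(L_p)^{m\times m}$, and in particular it controls the zeroth-order term at every level.

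To pass from this to $W_p^1$ convergence I would use the equation directly: $Y_j'-Y'=-(A_j-A)Y_j-A(Y_j-Y)$, estimating the right-hand side in $L_p$ by $\|A_j-A\|_{0,p}\,\sup_t\|Y_j(t)\|+\|A\|_{0,p}\,\sup_t\|Y_j(t)-Y(t)\|$, both of which tend to zero; this proves the case $n=0$. For the inductive step, assuming the statement for $n=k-1$, a sequence $A_j\to A$ in $W_p^{k}$ also converges in $W_p^{k-1}$, so the inductive hypothesis supplies $Y_j\to Y$ in $(W_p^{k})^{m\times m}$, whence $\|Y_j\|_{k,p}$ stays bounded. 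Applying the Banach algebra inequality in $W_p^{k}$ to the same identity gives
\[
\|Y_j'-Y'\|_{k,p}\;\leq\; C\,\|A_j-A\|_{k,p}\,\|Y_j\|_{k,p}+C\,\|A\|_{k,p}\,\|Y_j-Y\|_{k,p}\;\longrightarrow\;0 .
\]
Combined with $Y_j\to Y$ in $L_p$ and the fact that $\|Z\|_{k+1,p}$ is equivalent to $\|Z\|_{0,p}+\|Z'\|_{k,p}$, this yields $Y_j\to Y$ in $(W_p^{k+1})^{m\times m}$, closing the induction.

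The main obstacle I anticipate lies in the base case rather than in the induction: the Banach algebra estimate is unavailable at the $L_p$ level, since $L_p$ is not an algebra, so the initial continuous dependence must be extracted through the Gronwall estimate and the accompanying uniform $C^{(0)}$-bound on the matricants $Y_j$. Once $C^{(0)}$-continuity and this uniform bound are secured, the higher-order derivatives are governed purely algebraically through $Y'=-AY$, and the induction proceeds mechanically.
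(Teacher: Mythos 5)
Your proof is correct and follows essentially the same route as the paper: induction on $n$, with the identity $Y'=-AY$ and the Banach algebra property of $W_p^{k}$ for $k\geq 1$ driving the inductive step. The only real divergence is in the base case: where the paper obtains the uniform convergence of the matricants by citing Tamarkin's lemma \cite{Tamar}, you derive it directly from the integral equation via Gronwall, together with a uniform bound on $\sup_t\|Y_j(t)\|$. This makes your argument self-contained and also treats more carefully the point the paper glosses over at level zero: $L_p$ is not a Banach algebra, so the convergence $A_jY_j\to AY$ in $L_p$ cannot come from a product estimate in the norm but must be extracted from the uniform bounds on the matricants, which is exactly what your splitting $(A_j-A)Y_j+A(Y_j-Y)$ accomplishes. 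The inductive step, including the boundedness of $\|Y_j\|_{k,p}$ supplied by the inductive hypothesis, matches the paper's argument.
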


\begin{proof} It is necessary to show that the relation
$$\|A(\cdot;\varepsilon)-A(\cdot;0)\|_{n,p}\rightarrow 0, \quad \varepsilon\rightarrow 0+,
$$
implies that $\|Y(\cdot;\varepsilon)-Y(\cdot;0)\|_{n+1,p}\rightarrow 0$. We again proceed by induction on the parameter $n\in\mathbb{N} \cup \{0\}$.

To this end, we consider the following family of matrix problems parametrized by a number $\varepsilon\in[0,\varepsilon_0]$:
\begin{equation}\label{r6}
  Y'(t;\varepsilon)+A(t;\varepsilon)Y(t;\varepsilon)=0,\quad t\in
  (a,b),
\end{equation}
\begin{equation}\label{r7}
   Y(a;\varepsilon)=I_{m},
\end{equation}
where $$A(\cdot;\varepsilon)\in(L_p)^{m\times m}.$$

Assume that the condition
\begin{equation}\label{r8}
\|A(\cdot;\varepsilon)-A(\cdot;0)\|_{0, p}\rightarrow 0
\end{equation}
 as $\varepsilon\rightarrow 0+$ is satisfied. In this case, we can show that the uniquely defined solutions of problems
 (\ref{r6}), (\ref{r7}) satisfy the limit relation
$$
\|Y(\cdot;\varepsilon)-Y(\cdot;0)\|_{1,p}\rightarrow 0,
$$
which is equivalent to the following relation:
$$\|Y(\cdot;\varepsilon)-Y(\cdot;0)\|_{1,p}:=\|Y(\cdot;\varepsilon)-Y(\cdot;0)\|_{0, p}+
\|Y'(\cdot;\varepsilon)-Y'(\cdot;0)\|_{0, p}.$$
Hence, it suffices to show that each term on the right-hand side of this equality tends to zero.

By using condition \eqref{r8}, we get
\begin{equation*}
\|A(\cdot;\varepsilon)-A(\cdot;0)\|_{0, 1}\rightarrow 0.
\end{equation*}

In \cite{Tamar}, Tamarkin proved that this fact implies the uniform convergence of matricants
\begin{equation}\label{r12}
\|Y(\cdot;\varepsilon)-Y(\cdot;0)\|_{\infty}\rightarrow 0.
\end{equation}
Hence, $$\|Y(\cdot;\varepsilon)-Y(\cdot;0)\|_{0,p}\rightarrow 0.$$

Since the Sobolev spaces $(W_p^{n})^{m\times m}$ form a Banach algebra, relations \eqref{r8} and \eqref{r12} imply that   $$\|A(\cdot;\varepsilon)Y(\cdot;\varepsilon)-A(\cdot;0)Y(\cdot;0)\|_{0, p}\rightarrow 0.$$
Thus, by using equality (\ref{r6}), we get
$$\|Y'(\cdot;\varepsilon) - Y'(\cdot;0)\|_{0,p}\rightarrow 0.$$

 Assume that the conclusion of the lemma is true for some number $n=k\in \mathbb{N}$ and the solution $Y(\cdot)\in \mathcal{Y}_{p}^{k}$ of problem (\ref{r6}), (\ref{r7}) continuously depends on the coefficient $A(\cdot)\in (W_p^{k-1})^{m\times m}$ for $\varepsilon= 0$.

It is necessary to prove that the conclusion of the lemma remains true for $n=k+1$. Assume that the condition
$$
\|A(\cdot;\varepsilon)-A(\cdot;0)\|_{k,p}\rightarrow 0
$$
holds as $\varepsilon\rightarrow 0+$.

Further, since the Sobolev spaces form a Banach algebra, in view of the assumption made above, we conclude that
$$\|A(\cdot;\varepsilon)Y(\cdot;\varepsilon)-A(\cdot;0)Y(\cdot;0)\|_{k, p}\rightarrow 0.$$
By using equation (\ref{r6}), we get
$$\|Y'(\cdot;\varepsilon) - Y'(\cdot;0)\|_{k,p}\rightarrow 0.$$
This yields the required relation
\begin{gather*} \|Y(\cdot;\varepsilon) - Y(\cdot;0)\|_{k+1,p}\rightarrow 0.\tag*{\qed}
\end{gather*}\renewcommand{\qed}{}
\end{proof}

\begin{lemma}\label{l4} The coefficients $A(\cdot;\varepsilon)\in (W_p^{n})^{m\times m}$ for $\varepsilon= 0$
continuously depend on the solutions $$Y(\cdot;\varepsilon)\in \mathcal{Y}_{p}^{n+1}$$ of problem \eqref{r6}, \eqref{r7}.
\end{lemma}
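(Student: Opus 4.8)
The plan is to use the inversion formula \eqref{r5}, which recovers the coefficient from the matricant as $A(\cdot;\varepsilon)=-Y'(\cdot;\varepsilon)\,Y^{-1}(\cdot;\varepsilon)$, and to show that each operation on the right-hand side is continuous in the relevant Sobolev norms. Thus I would assume $\|Y(\cdot;\varepsilon)-Y(\cdot;0)\|_{n+1,p}\to 0$ as $\varepsilon\to 0+$ and prove that $\|A(\cdot;\varepsilon)-A(\cdot;0)\|_{n,p}\to 0$.

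First, since differentiation is a bounded operator from $(W_p^{n+1})^{m\times m}$ into $(W_p^n)^{m\times m}$, the hypothesis immediately gives $\|Y'(\cdot;\varepsilon)-Y'(\cdot;0)\|_{n,p}\to 0$. Because every $Y(\cdot;\varepsilon)$ lies in $\mathcal{Y}_p^{n+1}$, it is nondegenerate on $[a,b]$, so by Lemma \ref{l1} its inverse belongs to $(W_p^{n+1})^{m\times m}\subset (W_p^n)^{m\times m}$; hence both factors in \eqref{r5} are genuine elements of the Banach algebra $(W_p^n)^{m\times m}$.

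The key step, which I expect to be the main obstacle, is the continuity of inversion, $\|Y^{-1}(\cdot;\varepsilon)-Y^{-1}(\cdot;0)\|_{n,p}\to 0$, which I would in fact establish in the stronger norm $\|\cdot\|_{n+1,p}$. I would derive this from the cofactor representation \eqref{r2}: the entries of the adjugate $Y^T(\cdot)$ are polynomial in the entries of $Y(\cdot)$, so by the Banach algebra property they converge in $W_p^{n+1}$, and likewise $\det Y(\cdot;\varepsilon)\to\det Y(\cdot;0)$ in $W_p^{n+1}$. The delicate point is the scalar reciprocal $1/\det Y(\cdot)$: via the embedding $W_p^{n+1}\hookrightarrow C[a,b]$ the determinants are continuous, and by the Liouville--Jacobi formula together with the uniform convergence of the matricants they are bounded away from zero uniformly in $\varepsilon$ for small $\varepsilon$. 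Since inversion is continuous on the invertible elements of the unital Banach algebra $W_p^{n+1}$, this yields $1/\det Y(\cdot;\varepsilon)\to 1/\det Y(\cdot;0)$ in $W_p^{n+1}$. Multiplying the convergent adjugate by the convergent reciprocal gives convergence of $Y^{-1}(\cdot;\varepsilon)$ in $W_p^{n+1}$, and a fortiori in $W_p^n$.

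Finally, joint continuity of multiplication in the Banach algebra $(W_p^n)^{m\times m}$ shows that $-Y'(\cdot;\varepsilon)\,Y^{-1}(\cdot;\varepsilon)$ converges to $-Y'(\cdot;0)\,Y^{-1}(\cdot;0)$ in the $\|\cdot\|_{n,p}$ norm, which by \eqref{r5} is exactly $\|A(\cdot;\varepsilon)-A(\cdot;0)\|_{n,p}\to 0$. Together with Lemmas \ref{l2} and \ref{l3}, this completes the proof that the mapping \eqref{rr6} is a homeomorphism, establishing Theorem \ref{th1}.
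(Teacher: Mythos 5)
Your proof is correct and follows essentially the same route as the paper: recover $A$ from $Y$ via \eqref{r5}, observe that differentiation is bounded from $(W_p^{n+1})^{m\times m}$ to $(W_p^{n})^{m\times m}$, establish continuity of the inversion $Y\mapsto Y^{-1}$, and conclude by joint continuity of multiplication in the Banach algebra. In fact you supply more detail than the paper at the key step: the paper justifies $\|Y^{-1}(\cdot;\varepsilon)-Y^{-1}(\cdot;0)\|_{n+1,p}\to 0$ merely ``by virtue of Lemma~\ref{l1}'' (which only asserts that $Y^{-1}$ \emph{belongs} to $(W_p^{n+1})^{m\times m}$, not that inversion is continuous), whereas your adjugate--determinant argument based on \eqref{r2} and the continuity of inversion in a unital Banach algebra actually proves it.
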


\begin{proof} Assume that the solutions of problems (\ref{r6}), (\ref{r7}) satisfy the limit relation
\begin{equation}\label{r14}
\|Y(\cdot;\varepsilon)-Y(\cdot;0)\|_{n+1,p}\rightarrow 0
\end{equation}
as $\varepsilon\rightarrow 0+$. This enables us to prove that
 $$\|A(\cdot;\varepsilon)-A(\cdot;0)\|_{n,p}\rightarrow 0.$$

In view of assumption (\ref{r14}), we conclude that
$$\|Y'(\cdot;\varepsilon)-Y'(\cdot;0)\|_{n,p}  \rightarrow
0,$$
and, by virtue of Lemma \ref{l1}, we get
\begin{equation*}
\|Y^{-1}(\cdot;\varepsilon)-Y^{-1}(\cdot;0)\|_{n+1,p} \rightarrow
0.
\end{equation*}
By using these relations and equality (\ref{r5}), we can prove that
$$\|A(\cdot;\varepsilon)-A(\cdot;0)\|_{n,p} = \|Y'(\cdot;\varepsilon)Y^{-1}(\cdot;\varepsilon)-Y'(\cdot;0)Y^{-1}(\cdot;0)\|_{n,p}  \rightarrow
0, \quad \varepsilon\rightarrow 0+.$$

Hence, we have established the bicontinuity of the mapping
\begin{equation*} A(\cdot)\mapsto Y(\cdot)\colon (W_{p}^{n-1})^{m\times m}\rightarrow \mathcal{Y}_{p}^{n}.
 \tag*{\qed}
  \end{equation*}
  \renewcommand{\qed}{}
\end{proof}

So,  Theorem \ref{th1} is proved.

We now establish one more auxiliary statement:

\begin{lemma}\label{dija oper}
For any matrix function $Y(\cdot)\in (W^{n}_p)^{m\times m}$, a vector $q\in\mathbb{C}^m$, and a linear continuous operator $B\colon (W^{n}_p)^{m\times m} \times\mathbb{C}^m$, the following equality is true:
\begin{equation*}\label{rivn_matruz}
B(Y(\cdot)q) = \left[BY\right]q,
\end{equation*}
where the matrix $\left[BY\right]$ is given by equality \eqref{3.BY}.
\end{lemma}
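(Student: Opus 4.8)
The plan is to reduce the identity to the $\mathbb{C}$-linearity of the operator $B$. First I would write the matrix function $Y(\cdot)$ in terms of its columns $Y_{(1)}(\cdot),\ldots,Y_{(m)}(\cdot)$, where $Y_{(j)}(\cdot)=\bigl(y_{1,j}(\cdot),\ldots,y_{m,j}(\cdot)\bigr)^{T}$ is precisely the column on which $B$ acts in the definition \eqref{3.BY}. Each $Y_{(j)}(\cdot)$ belongs to $(W_p^n)^m$, so the value $BY_{(j)}\in\mathbb{C}^m$ is well defined and, by construction, constitutes the $j$-th column of the numerical matrix $[BY]$.

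Next I would observe that, for a fixed vector $q=(q_1,\ldots,q_m)^{T}\in\mathbb{C}^m$, the vector function $Y(\cdot)q$ is the linear combination $\sum_{j=1}^{m} q_j\,Y_{(j)}(\cdot)$ of the columns, and is again an element of $(W_p^n)^m$. Applying $B$ and invoking its additivity together with homogeneity with respect to the scalar factors $q_j$, I would obtain
$$
B\bigl(Y(\cdot)q\bigr)=\sum_{j=1}^{m} q_j\,BY_{(j)}.
$$

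Finally I would identify the right-hand side with the product $[BY]q$: since the columns of $[BY]$ are exactly the vectors $BY_{(j)}$, we have $[BY]q=\sum_{j=1}^{m} q_j\,BY_{(j)}$, which coincides with the expression just derived. This gives the asserted equality $B(Y(\cdot)q)=[BY]q$.

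There is no substantive obstacle here; the only point requiring care is bookkeeping, namely verifying that the column of $Y(\cdot)$ singled out in \eqref{3.BY} is the same column that appears as a summand in $Y(\cdot)q$, and that scalar homogeneity of $B$ may be legitimately applied term by term. Since $B$ is assumed to be a linear continuous operator, both additivity and homogeneity are at our disposal, and the identity follows at once.
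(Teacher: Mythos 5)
Your proposal is correct and follows essentially the same route as the paper: both arguments expand $Y(\cdot)q$ as the linear combination $\sum_j q_j Y_{(j)}(\cdot)$ of the columns of $Y(\cdot)$, apply the linearity of $B$ term by term, and identify the result with the matrix--vector product $[BY]q$ (the paper merely carries this out in entrywise index notation rather than column notation).
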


\begin{proof} Assume that the matrix function $Y(\cdot)=(y_{ij}(\cdot))_{i,j=1}^m$ and the column vector \smash{$q = (q_{j})_{j=1}^m$.} We denote
\begin{equation*}(\alpha_{i})_{i=1}^m = \left[BY\right] q \quad \mbox{and}\quad (\beta_{i})_{i=1}^m = B(Y(\cdot) q).\end{equation*} Let
$$
B(y_{k}(\cdot))_{k=1}^m =: (c_{k})_{k=1}^m.
$$
As a result of the action of the operator $B$ upon the matrix function $Y(\cdot)$, we obtain a matrix
$$
\left[BY\right] = (c_{ij})_{i,j=1}^m.
$$
Thus, we get
$$
(\alpha_{i})_{i=1}^m = (c_{ij})_{i,j=1}^m (q_{j})_{j=1}^m
= \left(\sum _{j=1}^m c_{ij} q_{j} \right)_{i=1}^m.
$$
Hence, an arbitrary element $\alpha_{i}$ takes the form
$$
\alpha_{i} = \sum _{j=1}^m c_{ij} q_{j}, \quad i\in \{1,2,\ldots,m\}.
$$
However,
$$
(\beta_{i})_{i=1}^m = B\left((y_{ij}(\cdot))_{i,j=1}^m (q_{j})_{j=1}^m \right) = B \left( \sum _{j=1}^m  y_{ij}(\cdot)  q_{j} \right)_{i=1}^m=
$$
$$= \sum _{j=1}^m  \left(B y_{ij}(\cdot)\right)_{i=1}^m  q_{j} =\sum _{j=1}^m \left( c_{ij} \right)_{i=1}^m q_{j}=\left(\sum _{j=1}^m c_{ij}q_{j} \right)_{i=1}^m.
$$
This implies that $\alpha_{i} = \beta_{i}$, $i\in \{1,2,\ldots,m\}$.
\end{proof}

\section{Proofs of Theorems \ref{th_fredh-bis} and~\ref{th_invertible-bis}}\label{section5}

\begin{proof}[Proof of Theorem \ref{th_fredh-bis}]
We first prove the continuity of the operator $(L,B)$.
Since, by condition, the operator $B$ is linear and continuous, it suffices to establish the continuity of the operator~$L$, which is equivalent to its boundedness. The boundedness of the linear operator
\begin{gather*}
L\colon (W^{n}_p)^m\rightarrow (W^{n-1}_p)^m
\end{gather*}
follows from the definition of norms in the Sobolev spaces $W_p^n$ and, in addition, each of these spaces forms
a Banach algebra.

We now prove that $(L,B)$ is a Fredholm operator and find its index. We choose a fixed linear bounded
operator $C_{r,m}\colon (W^{n}_p)^m\rightarrow\mathbb{C}^{r}$. Thus, the operator $(L,B)$ admits a representation
\begin{equation*}
(L,B)=(L,C_{r,m})+(0,B-C_{r,m}),
\end{equation*}
where the operator
$$(L,C_{r,m})\colon (W^{n}_p)^m\rightarrow (W^{n-1}_p)^m\times\mathbb{C}^r,$$ and the second term is a finite-dimensional operator. By the second theorem on stability (see, e.g., \cite[Chapter~3, Section~1]{Kato_book}), the operator $(L,B)$ is Fredholm if the operator $(L,C_{r,m})$ is Fredholm and, in addition,
$$\operatorname{ind}(L,B)=\operatorname{ind}(L,C_{r,m}).$$
Hence, it suffices to show that the operator $(L,C_{r,m})$ is Fredholm and find its index by choosing a proper operator $C_{r,m}$. To this end, we consider the following three cases:

\vspace{+0.2cm}

\textbf{1.} Let $r=m$.
We set
\[C_{m,m}y:=(y_1(a),\dots , y_m(a)).\]

We find the null space and the range of this operator. Let $y(\cdot)$ belong to $\operatorname{ker}(L,C_{r,m})$. Thus, $$Ly=0 \quad \mbox{and} \quad C_{m,m}y=(y_1(a),\dots , y_m(a))=0.$$ By virtue of the theorem on the unique solvability of the Cauchy problem, we get $y(\cdot)= 0$. Hence, $$\operatorname{ker}(L,C_{m,m})=0.$$

Further, assume that $h\in (W^{n-1}_p)^m\times\mathbb{C}^m$ and $c\in\mathbb{C}^m$ are chosen arbitrarily. By Theorem~\ref{th1} there exists a vector function $y(\cdot)\in (W^{n}_p)^m$ such that
$$Ly=h, \quad (y_1(a),\dots, y_m(a))=c.$$ Hence, $$\operatorname{ran}(L,C_{r,m})=(W^{n-1}_p)^m\times\mathbb{C}^m.$$

\vspace{+0.2cm}

\textbf{2.} Let $r>m$. We set
\[C_{r,m}y:=(y_1(a),\dots, y_m(a), \underbrace{0,\dots ,0}_{r-m})\in\mathbb{C}^{r}.\]

It is necessary to determine the null space of the operator $(L,C_{r,m})$. Let $y(\cdot)$ belong to $\operatorname{ker}(L,C_{r,m})$. Then $$Ly=0 \quad \mbox{and} \quad (y_1(a),\dots , y_m(a))=0.$$ By the theorem on uniqueness of the solution of the Cauchy problem, we find $y(\cdot)= 0$.

We represent the set of values of the operator $(L,C_{r,m})$ in the form of a direct sum of two subspaces as
follows: \[\operatorname{ran}(L,C_{r,m})=\operatorname{ran}(L,C_{m,m})\oplus (\underbrace{0,\dots ,0}_{r-m}).\]
However, as shown above, $$\operatorname{ran}(L,C_{m,m})=(W^{n-1}_p)^m\times\mathbb{C}^m.$$ Hence, $$\operatorname{def} \operatorname{ran}(L,C_{r,m})=r-m.$$

\vspace{+0.2cm}

\textbf{3.} Let $r< m$. We set
\[C_{r,m}y:=(y_1(a),\dots , y_r(a))\in\mathbb{C}^{r}.\]

It is necessary to prove that
\begin{gather*}
\operatorname{dim} \operatorname{ker}(L,C_{r,m})=m-r, \\
\operatorname{def} \operatorname{ran}(L,C_{r,m})=0.
\end{gather*}
Let $y(\cdot)$ belong to $\operatorname{ker}(L,C_{r,m})$. Thus, $$Ly=0 \quad \mbox{and} \quad (y_1(a),\dots, y_r(a))=0.$$ We now consider the following $m-r$ Cauchy problems:
\begin{gather*}
Ly_k=0, \quad C_{m,m}y_k=e_k, \quad \mbox{where} \quad k\in \{r+1, r+2,\dots ,m\}, \\
e_k:=(0,\dots, 0, \underbrace{1}_{k}, 0, \dots ,0) \in {C}^{m}.
\end{gather*}
It follows from Theorem \ref{th1} that solutions of these problems are linearly independent and form a basis in the subspace $\operatorname{ker}(L,C_{r,m})$.

The surjectivity of the operator $(L,C_{r,m})$ follows from the established surjectivity of the~operator~$(L,C_{m,m})$.

Hence, in each of the analyzed three cases, the operator $(L,B)$ is a Fredholm operator with index $m-r$.
\end{proof}

\begin{proof}[Proof of Theorem~\ref{th_invertible-bis}] By virtue of Theorem \ref{th_fredh-bis}, the invertibility of the operator $(L,B)$ is equivalent to $r=m$ and $\operatorname{ker}(L,B) = \{0\}$.
Hence, it suffices to show that the condition $$\operatorname{ker}(L,B)\neq\{0\}$$ is equivalent to the singularity of the square matrix \eqref{3.BY}.

Let $\operatorname{ker}(L,B)\neq\{0\}$. Then, by Lemma \ref{dija oper}, there exists a nontrivial solution of the~homogeneous equation $(L,B)y=(0,0)$ such that
$$y(\cdot)\in \operatorname{ker}(L,B) \Leftrightarrow (\exists\:q \in\mathbb{C}^{m}\colon y(t) = Y(t)\cdot q, \, \left[BY\right]q=0), $$
where the vector $q\neq0$.
This means that the columns of matrix \eqref{3.BY} are not linearly independent and the matrix is degenerate.

Conversely, let matrix \eqref{3.BY} be degenerate. Then its columns are not linearly independent. This means that,
for some vector $q\neq0$
\begin{equation*}
[BY]q=0.
\end{equation*}
We set $y(\cdot):=Y(\cdot)q$. Then $y(\cdot)\neq0, \, Ly=0$, and
\[By=B(Y(\cdot)q) =[BY]q=0\]
by Lemma \ref{dija oper}.
Hence, $y(\cdot)\in \operatorname{Ker}(L,B)\neq\{0\}$.
\end{proof}

\bigskip

\end{document}